\theoremstyle{plain}
\newtheorem{theorem}{Theorem}[section]
\newtheorem{proposition}{Proposition}[section]
\newtheorem{corollary}{Corollary}[section]
\numberwithin{equation}{section}
\theoremstyle{remark}
\newtheorem{remark}{Remark}[section]
 \numberwithin{equation}{section}
\def\<{\left < }
\def\>{\right >}
\def\({\left ( }
\def\){\right )}
\def\e{\eqref}
\def\x{{\bf x}}
\def\x{\hbox{\bf x}}
\begin{document}
\title[Incompressible canonical vector field]{Euclidean submanifolds with incompressible canonical vector field}
\author[B.-Y. Chen]{Bang-Yen Chen} \address{2231 Tamarack Drive, Okemos, Michigan 48864-5929, U.S.A.} \email{chenb@msu.edu}

\begin{abstract} For a submanifold $M$ in a Euclidean space $\mathbb E^m$,
the tangential component $\hbox{\bf x}^T$ of the position vector field $\x$ of $M$ is the most natural vector field tangent to the Euclidean submanifold, called the \textit{canonical vector field} of $M$. 
In this article,  first we prove that the canonical vector field of every Euclidean submanifold is always conservative. Then we initiate the study of Euclidean submanifolds with incompressible canonical vector fields. In particular, we obtain the necessary and sufficient conditions for the canonical vector field of a Euclidean submanifold to be incompressible. Further, we provide examples of Euclidean submanifolds with incompressible canonical vector field. Moreover, we classify planar curves, surfaces of revolution and hypercylinders with incompressible canonical vector fields.

\end{abstract}

\subjclass[2000]{53A07, 53C40, 53C42}
\keywords{Euclidean submanifold, canonical vector field, conservative vector field, incompressible vector field.}
\maketitle

\section{Introduction}

For an $n$-dimensional submanifold $M^n$ in the Euclidean $m$-space $\mathbb{E}
^m$, the most elementary and natural geometric object is the position vector
field $\mathbf{x}$ of $M^n$. The position vector is a Euclidean vector $
\mathbf{x} =\overrightarrow{OP}$ that represents the position of a point $
P\in M^n$ in relation to an arbitrary reference origin $O\in \mathbb{E}^m$.

The position vector field plays important roles in physics, in particular in
mechanics. For instance, in any equation of motion, the position vector $\mathbf{x}(t)$ is usually the most sought-after quantity because the
position vector field defines the motion of a particle (i.e., a point mass):
its location relative to a given coordinate system at some time variable $t$. The first and the second derivatives of the position vector field with
respect to time $t$ give the velocity and acceleration of the particle.

For the Euclidean submanifold $M^n$, there exists a natural
decomposition of the position vector field $\mathbf{x}$ given by: 
\begin{align}
\mathbf{x}=\mathbf{x}^T+\mathbf{x}^N,
\end{align}
where $\mathbf{x}^T$ and $\mathbf{x}^N$ denote the tangential and the normal
components of $\mathbf{x}$, respectively. 

A vector field on a Riemannian manifold is called {\it conservative} if it is  the gradient of some function, known as a {\it scalar potential}. Conservative vector fields appear naturally in mechanics: They are vector fields representing forces of physical systems in which energy is conserved. 
Conservative vector fields have the property that the line integral is path independent, i.e., the choice of any path between two points does not change the value of the line integral   (cf. e.g., \cite{MT03}).   

A vector field on a Riemannian manifold  is called \textit{incompressible}  if it is a vector field with divergence zero at all points in the field. An important class of incompressible vector fields are magnetic fields. It is well-known that magnetic fields are widely used throughout modern technology, particularly in electrical engineering and electromechanics  (cf. e.g., \cite{A89}). 

In earlier articles, we have investigated Euclidean submanifolds whose canonical vector fields are concurrent \cite{C16,C17a}, concircular \cite {CW17},  torse-forming \cite{CV17}, or conformal \cite{CD17}. (See \cite{C17c,C17d} for recent surveys on several topics associated with position vector fields on Euclidean submanifolds.)

In this article,  first we prove that the canonical vector field of every Euclidean submanifold is always conservative. Then we initiate the study of Euclidean submanifolds with incompressible canonical vector fields. In particular, we obtain the necessary and sufficient conditions for the canonical vector field of a Euclidean submanifold to be incompressible. Further, we provide examples of Euclidean submanifolds with incompressible canonical vector field. Moreover, we classify planar curves, surfaces of revolution and hypercylinders with incompressible canonical vector fields.

\section{Preliminaries}
\label{section 2}

Let $x: M^n\to \mathbb{E}^m$ be an isometric immersion of a connected
Riemannian manifold $M^n$ into a Euclidean $m$-space $\mathbb{E}^m$. For each
point $p\in M^n$, we denote by $T_pM^n$ and $T^\perp_p M^n$ the tangent space and
the normal space of $M^n$ at $p$, respectively.
Let $\nabla$ and $\tilde\nabla$ denote the Levi--Civita connections of $M^n$
and ${\mathbb{E}}^{m}$, respectively. The formulas of Gauss and Weingarten
are given respectively by (cf. \cite{book73,book11,book17}) 
\begin{align}  \label{2.1}
&\tilde \nabla_XY=\nabla_X Y+h(X,Y), \\
& \tilde\nabla_X\xi =-A_\xi X+D_X\xi ,  \label{2.2}
\end{align}
for vector fields $X,\,Y$ tangent to $M$ and $\xi$ normal to $M^n$, where $h$
denotes the second fundamental form, $D$ the normal connection and $A$ the shape
operator of $M^n$.

For each normal vector $\xi$ at $p$, the shape operator $A_\xi$ is a
self-adjoint endomorphism of $T_pM^n$. The second fundamental form $h$ and the
shape operator $A$ are related by 
\begin{equation}  \label{2.3}
g(A_\xi X,Y)g=\tilde g(h(X,Y), \xi),
\end{equation}
where $g$ and $\tilde g$ denote the metric of $M$ and the metric of the
ambient Euclidean space, respectively.

The mean curvature vector $H$ of an $n$-dimensional submanifold $M^n$ is
defined by 
\begin{align}  \label{2.4}
H=\(\frac{1}{n}\) \mathrm{trace}\; h.
\end{align}

The Laplacian $\Delta $ of $M^n$ acting on smooth vector fields on a
Riemannian $n$-manifold $(M^n,g)$ is defined by
\begin{align}  \label{2.5}
\Delta X=-\sum\limits_{i=1}^{n}\left( \nabla _{e_{i}}\nabla _{e_{i}}X-\nabla
_{\nabla _{e_{i}}e_{i}}X\right),
\end{align}
where $\{e_1,\ldots,e_n\}$ is an orthonormal local frame of $M^n$.

\section{Canonical vector field of a Euclidean submanifold}

For the canonical vector field $\x^T$ of a Euclidean submanifold $M^n$ in the Euclidean $m$-space $\mathbb E^m$, we have the following general results.

\begin{theorem}\label{T:3.1} Let $M^n$ be a submanifold of the Euclidean $m$-space $\mathbb{E}^m$.
Then we have:
\begin{itemize}
\item[\textrm{(1)}] The canonical vector field  of $M^n$ is always conservative.

\item[\textrm{(2)}] The canonical vector field  of $M^n$ is incompressible if and only if $\<H,\x\>=-1$  holds identically on $M^n$.
\end{itemize}
\end{theorem}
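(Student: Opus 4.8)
The plan is to reduce everything to the single elementary identity $\tilde\nabla_X\x=X$, valid for every vector $X$ tangent to $M^n$, which simply expresses that the position field is the Euclidean gradient of $\tfrac12\<\x,\x\>$. First I would decompose $\x=\x^T+\x^N$ and differentiate along $X$, applying the Gauss formula \e{2.1} to the tangential part $\x^T$ and the Weingarten formula \e{2.2} to the normal part $\x^N$:
\begin{align*}
X=\tilde\nabla_X\x=\nabla_X\x^T+h(X,\x^T)-A_{\x^N}X+D_X\x^N.
\end{align*}
Comparing the tangential and normal components separately yields two master relations, $\nabla_X\x^T=X+A_{\x^N}X$ and $h(X,\x^T)+D_X\x^N=0$; only the first is needed for this theorem.

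For part (1) I would exhibit an explicit scalar potential. Set $\rho:=\tfrac12\<\x,\x\>$ and restrict it to $M^n$. For $X$ tangent to $M^n$ the identity $\tilde\nabla_X\x=X$ gives $X\rho=\<\x,X\>=\<\x^T,X\>$, since $\x^N$ is orthogonal to $X$. By the definition of the intrinsic gradient, $g(\nabla\rho,X)=X\rho=g(\x^T,X)$ for all such $X$, whence $\nabla\rho=\x^T$. Thus $\x^T$ is the gradient of $\tfrac12\<\x,\x\>$ and is therefore conservative.

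For part (2) I would compute the divergence directly from the first master relation. Choosing an orthonormal local frame $\{e_1,\dots,e_n\}$,
\begin{align*}
\mathrm{div}\,\x^T=\sum_{i=1}^n g(\nabla_{e_i}\x^T,e_i)=\sum_{i=1}^n\big(g(e_i,e_i)+g(A_{\x^N}e_i,e_i)\big)=n+\mathrm{trace}\,A_{\x^N}.
\end{align*}
Using \e{2.3} and the definition \e{2.4} of the mean curvature vector, $\mathrm{trace}\,A_{\x^N}=\sum_i\<h(e_i,e_i),\x^N\>=n\<H,\x^N\>$; and since $H$ is normal while $\x^T$ is tangent, $\<H,\x^N\>=\<H,\x\>$. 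Hence $\mathrm{div}\,\x^T=n\big(1+\<H,\x\>\big)$, which vanishes identically exactly when $\<H,\x\>=-1$.

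I do not expect a genuine obstacle here: the argument is essentially a single computation. The one point demanding care is the clean splitting into tangential and normal parts after differentiating, and the correct identification of $\mathrm{trace}\,A_{\x^N}$ with $n\<H,\x\>$, which relies on the self-adjointness of the shape operator together with the orthogonality of $H$ and $\x^T$.
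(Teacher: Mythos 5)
Your proof is correct, and it splits naturally into two comparisons. Part (1) coincides with the paper's argument: both exhibit the explicit scalar potential $\tfrac12\<\x,\x\>$ and verify $\nabla\rho=\x^T$ from $\tilde\nabla_X\x=X$. Part (2), however, takes a genuinely different route. The paper never isolates the covariant derivative of $\x^T$; it writes $\x^T=\sum_j\<e_j,\x\>e_j$ in an orthonormal frame, expands ${\rm div}(\x^T)=\sum_i\<\nabla_{e_i}\nabla f,e_i\>$ term by term, and disposes of the leftover connection terms through the skew-symmetry $\omega_i^k=-\omega_k^i$ of the connection forms, as in \e{3.4}--\e{3.5}. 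You instead split the identity $X=\tilde\nabla_X\x$ through the Gauss and Weingarten formulas \e{2.1}--\e{2.2} to obtain the invariant relation $\nabla_X\x^T=X+A_{\x^N}X$, and then simply take the trace, using ${\rm trace}\,A_{\x^N}=n\<H,\x^N\>=n\<H,\x\>$. Your route is frame-free, arrives at the same key formula ${\rm div}(\x^T)=n\(1+\<H,\x\>\)$ as \e{3.6} without any cancellation bookkeeping, and hands you for free the normal-component relation $h(X,\x^T)+D_X\x^N=0$, which is not needed here but is the standard companion identity in this circle of problems. The paper's frame computation buys nothing extra in this instance beyond avoiding the Weingarten formula. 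One small quibble on attribution in your closing remark: the identification ${\rm trace}\,A_{\x^N}=n\<H,\x^N\>$ follows from \e{2.3} and the definition \e{2.4} alone; self-adjointness of the shape operator is not actually used in the trace step, only the orthogonality $\<H,\x^T\>=0$ is.
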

\begin{proof} Assume that $M^n$ is an $n$-dimensional submanifold of $\mathbb{E}^{m}$. We put
\begin{align}\label{3.1} f=\frac{1}{2} \<\x,\x\>,\end{align}
where $\x$ denotes the positive vector field of $M^n$ in $\mathbb E^m$.
Let $\{e_1,\ldots,e_n\}$ be an orthonormal  local frame of $M^n$. Then it follows from \e{3.1} that the gradient of $f$ satisfies
\begin{equation}\begin{aligned}\label{3.2}
 \nabla f\,&=\frac{1}{2}\sum_{i=1}^n (e_i \<\x,\x\>)e_i =\sum_{i=1}^n \<\right.\hskip-.02in \tilde\nabla_{e_i}\x,\x\hskip-.02in \left.\> e_i
\\& = \sum_{i=1}^n \<e_i,\x\>e_i=\x^T,
\end{aligned}\end{equation}
which implies that the canonical vector field is conservative with scalar potential $f$. This proves statement (1).

To prove statement (2), we need to compute the divergence ${\rm div} (\x^T)$ of the canonical vector field $\x^T$ of $M^n$. From \e{2.1}, \e{2.4}, \e{3.2} and the definition of divergence (cf. e.g. \cite{book11,book17}), we find
\begin{equation}\begin{aligned}\label{3.3}& \hskip-.1in 
{\rm div} (\x^T)=\sum_{i=1}^n \< \nabla_{e_i}\x^T,e_i \>=\sum_{i=1}^n \< \nabla_{e_i}\nabla f,e_i \>\\&=\sum_{i,j=1}^n 
\<\right.\hskip-.02in \nabla_{e_i}\! (\<e_j,\x\> e_j),e_i\hskip-.02in \left.\>
\\& = \sum_{i,j=1}^n \left(  \<\right.\hskip-.02in \tilde\nabla_{e_i}e_j,\x \hskip-.02in\left. \>\< e_j,e_i\>+\<e_j,e_i\>^2
+\<e_j,\x\>\<\nabla_{e_i}e_j,e_i\>\right)
\\& =n+ n\<H,\x\>+\sum_{i,j=1}^n \left(   \<\nabla_{e_i}e_j,\x  \>\< e_j,e_i\>+\<e_j,\x\>\<\nabla_{e_i}e_j,e_i\>\right),
\end{aligned}\end{equation}
where we have applied the well-known fact: $\tilde\nabla_X \x=X$ for any vector $X$ tangent to $M^n$.

If we put
\begin{align}\label{3.4}\nabla_X e_i=\sum_{k=1}^n \omega_i^k(X)e_k,
\end{align}
we get $\omega_i^k=-\omega_k^i$ for $1\leq i,k\leq n$.
Thus we obtain 
\begin{equation}\begin{aligned} \label{3.5}
\sum_{i,j=1}^n &\left(   \<\nabla_{e_i}e_j,\x  \>\< e_j,e_i\>+\<e_j,\x\>\<\nabla_{e_i}e_j,e_i\>\right)\\& =\sum_{i,k=1}^n \omega_i^k(e_i)\<e_k,\x\>+\sum_{i,j=1}^n \omega_j^i(e_i)\<e_j,\x\>
\\& =0.\end{aligned}\end{equation}
After  combining \e{3.3} and \e{3.5} we find
\begin{align}  \label{3.6}
{\rm div}(\x^T)=n(1+\<H,\x\>).
\end{align}
Therefore the canonical vector field $\x^T$ is incompressible if and only if $\<H,\x\>=-1$  holds identically. Consequently, we obtain statement (2).
\end{proof}

An immediate consequence of Theorem \ref{T:3.1} is the following.

\begin{corollary}\label{C:3.1} Let $M^n$ be an $n$-dimensional submanifold of the Euclidean $m$-space $\mathbb{E}^m$. Then the canonical vector field $v$ of $M^n$ is incompressible if and only if $\<\x,\Delta \x\>=n$  holds identically on $M^n$.
\end{corollary}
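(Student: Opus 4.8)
The plan is to connect the mean curvature condition $\langle H,\x\rangle=-1$ from Theorem \ref{T:3.1}(2) to the stated condition $\langle \x,\Delta\x\rangle=n$. The natural bridge is the classical Beltrami-type formula relating the Laplacian of the position vector field to the mean curvature vector. First I would recall that for an isometric immersion into $\mathbb{E}^m$, the well-known identity $\Delta\x=-nH$ holds, where $\Delta$ is the Laplacian acting on the vector-valued function $\x$ (equivalently, on each Euclidean coordinate function of the immersion). This is the key ingredient, and I expect establishing or citing it to be the main step of the argument.

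To derive this identity in the present notation, I would apply the definition \eqref{2.5} of the Laplacian together with the Gauss formula \eqref{2.1}. Starting from $\tilde\nabla_{e_i}\x=e_i$, one computes
\begin{align*}
\Delta\x=-\sum_{i=1}^n\left(\tilde\nabla_{e_i}\tilde\nabla_{e_i}\x-\tilde\nabla_{\nabla_{e_i}e_i}\x\right)
=-\sum_{i=1}^n\left(\tilde\nabla_{e_i}e_i-\nabla_{e_i}e_i\right)=-\sum_{i=1}^n h(e_i,e_i)=-nH,
\end{align*}
where the last equality uses the definition \eqref{2.4} of the mean curvature vector. Here I would be slightly careful about the sign and about the fact that the relevant Laplacian is the extrinsic one acting on the $\mathbb{E}^m$-valued map $\x$; since $\x$ is vector-valued, the ambient connection $\tilde\nabla$ appears in place of $\nabla$ in the second covariant derivative, and the tangential parts cancel against the $\tilde\nabla_{\nabla_{e_i}e_i}\x$ term, leaving only the normal (second fundamental form) contribution.

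Once $\Delta\x=-nH$ is in hand, the corollary is immediate. Taking the inner product with $\x$ gives $\langle\x,\Delta\x\rangle=-n\langle H,\x\rangle$, so the condition $\langle\x,\Delta\x\rangle=n$ is equivalent to $\langle H,\x\rangle=-1$. By Theorem \ref{T:3.1}(2), the latter is precisely the condition for the canonical vector field to be incompressible, which completes the proof. (I would also note that the symbol $v$ in the statement denotes the canonical vector field $\x^T$, so the two formulations refer to the same object.) The only genuine subtlety is the derivation of the Beltrami identity with the correct sign convention matching \eqref{2.5}; everything downstream is a one-line substitution.
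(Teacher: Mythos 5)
Your proposal is correct and follows essentially the same route as the paper, which simply combines Theorem \ref{T:3.1}(2) with the Beltrami formula $\Delta\x=-nH$ (cited there to \cite[page 41]{book11}). Your additional derivation of the Beltrami formula from \eqref{2.1}, \eqref{2.4}, \eqref{2.5} and $\tilde\nabla_X\x=X$ is sound and correctly matched to the sign convention of \eqref{2.5}; the paper merely cites the formula instead of proving it.
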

\begin{proof} Follows from  Theorem \ref{T:3.1}(2) and the well-known formula of Beltrami: $\Delta\x=-nH$ (see, e.g., \cite[page 41]{book11}).
\end{proof} 

Let $N$ be an $(n-1)$-dimensional submanifold of the unit hypersphere $S^{m-1}_o(1)$ of $\mathbb E^m$ centered at the origin $o\in \mathbb E^m$. The {\it cone over $N$ with vertex at $o$}, denoted by $CN$, is defined by the following map: $$N\times (0,\infty)\to \mathbb E^m; (p,t)\mapsto t p.$$

Another immediate consequence of Theorem \ref{T:3.1} is the following.

\begin{corollary}\label{C:3.2} The canonical vector field of any cone with vertex at the origin of $\mathbb{E}^m$ is never incompressible.
\end{corollary}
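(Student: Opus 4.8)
The plan is to apply Theorem~\ref{T:3.1}(2), which says the canonical vector field is incompressible if and only if $\<H,\x\>=-1$ holds identically. So it suffices to show that for any cone $CN$ with vertex at the origin, the identity $\<H,\x\>=-1$ fails at every point, and in fact I expect $\<H,\x\>$ to vanish identically because the cone is scale-invariant and the position vector is tangent to the cone.

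First I would observe that along each ruling $t\mapsto tp$ (with $p\in N\subset S^{m-1}_o(1)$), the position vector $\x$ at the point $tp$ is exactly $tp$, which is a positive multiple of the ruling direction. Since the ruling $t\mapsto tp$ is a line through the origin lying inside $CN$, its tangent direction $p$ is tangent to $CN$. Hence $\x=tp$ is everywhere \emph{tangent} to the cone, so its normal component vanishes: $\x^N=0$ and $\x=\x^T$. The key step is then to compute the normal component of the second fundamental form in this radial direction. Writing $e_1=p$ for the unit radial tangent vector, the curve $t\mapsto tp$ is a straight line, so $\tilde\nabla_{e_1}e_1=0$ along it; by the Gauss formula~\e{2.1} this gives $h(e_1,e_1)=\tilde\nabla_{e_1}e_1-\nabla_{e_1}e_1=0$ (the radial direction is a geodesic of the cone with vanishing acceleration in $\mathbb E^m$).

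From here I would compute $\<H,\x\>$ directly. Choosing an orthonormal frame $\{e_1,\dots,e_n\}$ with $e_1$ the radial direction, we have $nH=\sum_{i=1}^n h(e_i,e_i)$, and since $\x=tp=t e_1$ is a multiple of $e_1$,
\begin{align}
n\<H,\x\>=\sum_{i=1}^n \<h(e_i,e_i),\x\>=\sum_{i=1}^n \< A_{\x^N} e_i,e_i\>=0,\nonumber
\end{align}
because $\x^N=0$ makes the shape operator $A_{\x^N}$ vanish. Thus $\<H,\x\>=0$ identically on $CN$, and in particular $\<H,\x\>\ne -1$ everywhere. By Theorem~\ref{T:3.1}(2) the canonical vector field of $CN$ cannot be incompressible.

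The main obstacle, such as it is, lies in making the tangency claim $\x^N=0$ fully rigorous across the whole cone rather than just along a single ruling: one must verify that the radial direction $e_1=p$ is genuinely a tangent vector to $CN$ at the point $tp$ and that it can be completed to an orthonormal tangent frame, which follows from the product structure of the parametrization $(p,t)\mapsto tp$ since $\partial_t(tp)=p$. Once this is established the computation is immediate, and indeed the cleaner route is simply to note $\<H,\x\>=\<H,\x^T\>=0$ because $H$ is normal while $\x=\x^T$ is tangent, so their inner product vanishes automatically—this bypasses the shape-operator computation entirely.
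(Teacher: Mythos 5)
Your proof is correct and follows essentially the same route as the paper: the paper's one-line proof likewise combines Theorem~\ref{T:3.1}(2) with the fact that the position vector field of a cone with vertex at the origin is tangent to the cone, which forces $\<H,\x\>=\<H,\x^T\>=0\ne-1$ since $H$ is normal. Your detour through $h(e_1,e_1)=0$ and the shape operator $A_{\x^N}$ is harmless but unnecessary, as you yourself note in your closing remark, which is exactly the paper's argument.
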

\begin{proof} Follows from Theorem \ref{T:3.1}(2) and  fact that the position vector field of any cone with vertex at $o\in \mathbb{E}^m$ is tangent to the cone.
\end{proof}

If $(N,g)$ is a compact Riemannian homogeneous manifold, denote by $G$ the identity component of the group of isometries of $N$. Then $G$ is a compact Lie group which acts transitively on $N$. Thus $N = G/K$, where $K$ is the isotropy subgroup of $G$ at a point $p\in N$.
An immersion $\phi : (N, g) \to \mathbb E^m$ is said to be {\it equivariant} if and only if there exists a Lie homomorphism $\psi : G \to SO(m)$ such that $\phi(q(p)) = \psi(q)(\phi(p))$ for any $q \in G$ and $p\in N$.

\begin{theorem}\label{T:3.2} Every equivariantly isometrical immersion of a compact homogeneous Riemannian manifold into any Euclidean space has incompressible canonical vector field.\end{theorem}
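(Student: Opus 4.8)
The plan is to invoke Theorem \ref{T:3.1}(2) and to exploit the equivariance of $\phi$ together with the transitivity of the $G$-action. First I would fix the origin of $\mathbb E^m$ at the point $o$ left invariant by $SO(m)$; since the Lie homomorphism $\psi$ takes values in $SO(m)$, every $\psi(q)$ is a linear orthogonal transformation fixing $o$ and preserving the Euclidean norm. With this choice of origin the position vector field of $N=G/K$ is $\x=\phi$ itself, so the equivariance relation $\phi(q(p))=\psi(q)(\phi(p))$ is exactly adapted to our origin.

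Next I would examine the smooth function $f=\tfrac12\<\x,\x\>$ from \e{3.1}. Applying the equivariance relation and the orthogonality of $\psi(q)$ gives $|\phi(q(p))|^2=|\psi(q)\phi(p)|^2=|\phi(p)|^2$, so $f$ is constant along every $G$-orbit. Because $G$ acts transitively on the compact homogeneous manifold $N$, this forces $f$ to be constant on all of $N$. Then by \e{3.2} the canonical vector field satisfies $\x^T=\nabla f=0$, i.e.\ it vanishes identically; in particular ${\rm div}(\x^T)=0$, so $\x^T$ is incompressible. Equivalently, one may read off from \e{3.6} that $\<H,\x\>=-1$ holds on $N$, which is precisely the criterion of Theorem \ref{T:3.1}(2).

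The only point that genuinely requires care — and the one I expect to be the crux — is the identification of the correct origin, since the canonical vector field, and hence the very notion of being incompressible, depends on the choice of origin in $\mathbb E^m$. Once the origin is taken to be the $SO(m)$-fixed point, the heart of the argument reduces to the elementary observation that an orthogonally equivariant immersion of a transitive $G$-space has constant distance to that fixed point; everything else is immediate from the formulas already established in Theorem \ref{T:3.1}. I would therefore expect the written proof to be short, with the bulk of the work being the clean statement of the invariance $f(q(p))=f(p)$ and the appeal to transitivity.
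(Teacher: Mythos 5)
Your proof is correct, but it takes a genuinely different — and in fact more elementary — route than the paper. The paper argues via integration: since the immersion is equivariant and $N$ homogeneous, the scalar $1+\<H,\x\>$ is a constant $c$ on $N$; integrating and invoking the Minkowski--Hsiung formula $\int_N(1+\<H,\x\>)\,dV=0$ forces $c=0$, hence $\<H,\x\>=-1$ and incompressibility by Theorem \ref{T:3.1}(2). You instead work pointwise: taking the origin at the fixed point of $SO(m)$ (as the paper's definition of equivariance with $\psi:G\to SO(m)$ implicitly does), orthogonality of each $\psi(q)$ plus transitivity of $G$ shows $|\phi|$ is constant, so the image lies in a hypersphere centered at the origin; then $f=\tfrac12\<\x,\x\>$ is constant and \e{3.2} gives $\x^T=\nabla f\equiv 0$, which is trivially divergence-free. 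Your argument buys two things the paper's does not: it never uses compactness (only transitivity and the orthogonal target of $\psi$ — the integral formula \e{3.8} genuinely needs $N$ compact), and it yields the strictly stronger conclusion that the canonical vector field \emph{vanishes identically}, i.e., every such equivariant immersion is spherical about the origin, which explains why Corollary \ref{C:3.3} is a special case and shows the incompressibility here is of the trivial kind. What the paper's route buys in exchange is a direct verification of the criterion $\<H,\x\>=-1$ via a classical integral formula, without singling out the origin-fixing observation; your remark that the choice of origin is the crux is exactly right, since the canonical vector field is origin-dependent.
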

\begin{proof} Let $N$ be a compact Riemannian homogeneous manifold and $\phi : N \to \mathbb E^m$ be an equivariantly isometrical immersion. Then $1+\<H,\x\>$ is a constant, say $c$, on $N$, where $\x$ and $N$ denote the position vector field and mean curvature vector field of $N$ in $\mathbb E^m$, respectively. Hence we have
\begin{align}\label{3.7} \int_M (1+\<H,\x\>)dV= c \,{\rm vol}(N),\end{align}
where $dV$ and ${\rm vol}(N)$ denote the volume element of volume of $N$, respectively.

On the other hand, we have the following formula of Minkowski-Hsiung's formula \cite[page 305]{book15}:
\begin{align}\label{3.8} \int_M (1+\<H,\x\>)dV= 0.\end{align}
By comparing \e{3.7} and \e{3.8}, we obtain $\<H,\x\>=-1$ identically on $N$. Consequently, the equivariant immersion has incompressible canonical vector field according to Theorem \ref{T:3.1}(2).
\end{proof} 

An immediate consequence of Theorem \ref{T:3.2} is the following.

\begin{corollary}\label{C:3.3} Every hypersphere centered at the origin of $\mathbb E^{n+1}$ has incompressible canonical vector field.
\end{corollary}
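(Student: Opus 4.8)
The plan is to read Corollary~\ref{C:3.3} as a direct specialization of Theorem~\ref{T:3.2}, so that essentially no new work is required. First I would observe that a hypersphere $S^n(r)$ of radius $r$ centered at the origin is a compact Riemannian homogeneous manifold: the special orthogonal group $SO(n+1)$ acts transitively on $S^n(r)$ by isometries, with isotropy subgroup $SO(n)$, so that $S^n(r)=SO(n+1)/SO(n)$ in the notation preceding Theorem~\ref{T:3.2}.

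Next I would verify that the standard inclusion $\iota\colon S^n(r)\hookrightarrow \mathbb E^{n+1}$ is equivariant in the sense defined above. Taking the ambient target group to be $SO(m)$ with $m=n+1$, the relevant Lie homomorphism $\psi\colon SO(n+1)\to SO(n+1)$ may be chosen to be the identity, since the transitive isometric action of $SO(n+1)$ on the abstract space $S^n(r)$ is literally the restriction of the tautological linear action of $SO(n+1)$ on $\mathbb E^{n+1}$. Then every $q\in SO(n+1)$ satisfies $\iota(q(p))=q(\iota(p))=\psi(q)(\iota(p))$ for all $p\in S^n(r)$, which is exactly the equivariance condition. Theorem~\ref{T:3.2} applies verbatim and yields at once that the canonical vector field of $S^n(r)$ is incompressible.

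As an independent check — and the only place where a short computation enters — I would confirm the criterion $\<H,\x\>=-1$ of Theorem~\ref{T:3.1}(2) directly. On $S^n(r)$ the position vector field $\x$ is everywhere normal, and the round sphere satisfies $\Delta\x=\tfrac{n}{r^2}\,\x$; combined with the Beltrami relation $\Delta\x=-nH$ recalled in the proof of Corollary~\ref{C:3.1}, this gives $H=-\tfrac{1}{r^2}\,\x$, whence $\<H,\x\>=-\tfrac{1}{r^2}\<\x,\x\>=-1$ identically on $S^n(r)$. The two routes therefore agree.

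I do not expect any genuine obstacle here. The only conceptual point to get right — and it is purely routine bookkeeping — is the equivariance: one must record that the homogeneous structure on $S^n(r)$ is realized by the restriction of the linear $SO(n+1)$-action on the ambient space, so that the hypotheses of Theorem~\ref{T:3.2} are met with $\psi=\mathrm{id}$. Once this is in place the conclusion is immediate.
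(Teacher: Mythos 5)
Your proposal is correct and matches the paper's route exactly: the paper offers no written proof, presenting Corollary~\ref{C:3.3} as an immediate consequence of Theorem~\ref{T:3.2}, and your verification that the hypersphere is a compact homogeneous space with equivariant inclusion (taking $\psi=\mathrm{id}$) is precisely the specialization the paper intends. Your added direct check that $H=-\x/r^2$ gives $\<H,\x\>=-1$, invoking Theorem~\ref{T:3.1}(2), is a sound and welcome confirmation but not a different method.
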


\section{Planar curves with incompressible canonical vector field}

Recall that, up to rigid motions, a unit speed planar curve is determined completely by its curvature function.

The next theorem completely classifies planar curves with incompressible canonical vector field.

\begin{theorem}\label{T:4.1} Let $\gamma(s)$ be a unit speed planar curve in $\mathbb E^2$. Then
the canonical vector field of $\gamma$ is incompressible if and only if, up to rigid rotations of $\mathbb E^2$ about the origin, $\gamma$ is an open portion of a curve of the following two types:
\begin{itemize}
\item[\textrm{(a)}] A circle centered at the origin;

\item[\textrm{(b)}] A curve defined by
\begin{align} \label{4.1}\gamma= \frac{2}{c^2}\Big(\! \cos(c\sqrt{s})+c\sqrt{s}\sin (c\sqrt{s}),
 \sin(c\sqrt{s})-c\sqrt{s}\cos (c\sqrt{s})\Big)
\end{align}
for some nonzero real number $c$.
\end{itemize}
 \end{theorem}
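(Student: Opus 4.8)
The plan is to translate incompressibility into a condition on the curvature by means of Theorem \ref{T:3.1}(2), and then integrate. By that theorem the canonical vector field of $\gamma$ is incompressible exactly when $\<H,\x\>=-1$ holds identically, where here the position vector is $\x=\gamma$. For a unit speed planar curve I would set up the Frenet frame $\{T,N\}$ with $T=\gamma'$, $T'=\kappa N$, $N'=-\kappa T$, where $\kappa$ is the signed curvature. Since $\gamma$ is one-dimensional, the second fundamental form is $h(T,T)=\tilde\nabla_T T=\kappa N$, so the mean curvature vector is simply $H=\kappa N$. Hence the incompressibility condition collapses to the single scalar equation
\begin{align*}
\kappa\,\<\gamma,N\>=-1
\end{align*}
along the whole curve.

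The key observation I would exploit is that this equation is a total derivative in disguise. Writing $a=\<\gamma,T\>$ and $b=\<\gamma,N\>$ and differentiating via the Frenet equations gives $a'=\<T,T\>+\<\gamma,\kappa N\>=1+\kappa b$ and $b'=-\kappa a$. Thus $\kappa b=-1$ holds identically if and only if $a'=0$, that is, if and only if $\<\gamma,T\>$ is a constant $c_0$ along $\gamma$. Since $\<\gamma,T\>=\tfrac12\tfrac{d}{ds}\<\gamma,\gamma\>$, this is the same as saying $|\gamma|^2$ is an affine function of arclength. The classification is thereby reduced to describing curves with $\<\gamma,T\>\equiv c_0$.

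I would then split into two cases according to the value of $c_0$. If $c_0=0$, then $|\gamma|$ is constant, so $\gamma$ is an arc of a circle centered at the origin, which is family (a); here $\x^T\equiv 0$ and incompressibility is immediate. If $c_0\neq 0$, then $\kappa$ can never vanish (otherwise $\kappa b=-1$ would fail), so $b=-1/\kappa$; substituting this into $b'=-\kappa a=-\kappa c_0$ produces the separable ODE $\kappa'=-c_0\kappa^3$. Integrating and absorbing the constant by a shift of the arclength origin gives $\kappa^{-2}=2c_0 s$, i.e. $\kappa=\pm c/(2\sqrt{s})$ with $c^2=2/c_0$.

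The part I expect to be the real work is recovering the explicit parametrization \e{4.1} from this curvature. I would first integrate $\theta(s)=\int\kappa\,ds=c\sqrt{s}$, where the additive constant is a rotation about the origin and accounts for the ``up to rigid rotations'' clause, so that $T=(\cos(c\sqrt s),\sin(c\sqrt s))$; then I would integrate $\gamma=\int T\,ds$ using the substitution $u=c\sqrt s$, which yields precisely the right-hand side of \e{4.1}. A point requiring care is that the vector constant of integration is forced to be zero: any nonzero additive vector would break the constancy of $\<\gamma,T\>$, so no translational freedom survives, in agreement with the statement. Conversely, for the ``if'' direction I would simply differentiate \e{4.1}, check that it is unit speed with $T=(\cos(c\sqrt s),\sin(c\sqrt s))$, and compute $\<\gamma,T\>=2/c^2$, a constant, so that incompressibility follows from the equivalence established above.
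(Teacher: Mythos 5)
Your proof is correct and follows essentially the same route as the paper: both reduce via Theorem \ref{T:3.1}(2) to $\kappa\langle\gamma,N\rangle=-1$, use the Frenet equations to force $\kappa'=\mathrm{const}\cdot\kappa^{3}$, hence $\kappa\propto 1/\sqrt{s}$ after a shift of arclength, and integrate (with the same substitution $u=c\sqrt{s}$) to obtain \eqref{4.1}. Your reformulation of the condition as the first integral $\langle\gamma,T\rangle\equiv c_0$ — where the paper instead differentiates twice to get $(\kappa'/\kappa^{3})'=0$ and splits on $\kappa'=0$, which is equivalent to your split on $c_0=0$ — is a mild streamlining, and your explicit observation that the vector constant of integration must vanish (so no translational freedom survives) fills in a point the paper leaves implicit.
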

\begin{proof} For the unit speed planar curve $\gamma(s)$ in $\mathbb E^2$, we have
\begin{align}\label{4.2} &\gamma'(s)=T, \;\; T'(s)= \kappa N(s),\;\; N'(s)=-\kappa T,\\&\label{4.3} H=\kappa N,
\end{align}
where $T$ and $N$ are the unit tangent vector field and the principal normal, respectively, and $\kappa$ is the curvature function of $\gamma$.

Now, let us assume that $\gamma$ is planar curve whose canonical vector field is incompressible.
Then it follows from \e{4.3} and Theorem 3.1(2) that 
\begin{align} \label{4.4}\<N,\gamma\>=-\frac{1}{\kappa}.
\end{align}
By differentiating \e{4.2} with respect to the arclength $s$, we get
\begin{align} \label{4.5}\<T,\gamma\>=-\frac{\kappa'}{\kappa^3}.
\end{align}

\vskip.1in
{\it Case} (a): If $\kappa'(s)=0$. In this case, \e{4.5} implies that $\<T,\gamma\>=0$ holds identically. Thus the position vector field of $\gamma$ is always normal to the curve. Hence
$\gamma$ is an open part of a circle centered at the origin of $\mathbb E^2$.

Conversely, if  $\gamma$ is an open part of a circle centered at the origin of $\mathbb E^2$, then the canonical vector field $\x^T$ of $\gamma$ vanishes identically. Therefore $\x^T$ is trivially incompressible.

\vskip.1in
{\it Case} (b): If $\kappa'(s)\ne 0$. In this case, by differentiating \e{4.5} with respect to $s$ and using \e{4.4}, we find
\begin{align} \label{4.6}0=\(\frac{\kappa'}{\kappa^3}\)',
\end{align}
which implies that the curvature function $\kappa(s)$ satisfies
\begin{align} \label{4.7}\frac{\kappa'}{\kappa^3}=-\frac{2}{b}
\end{align} for some nonzero real number $b$.

After solving \e{4.7} we obtain
\begin{align} \label{4.8}\kappa^2=\frac{b}{4(s-a)}
\end{align}
for some real number $a$.  Therefore, by applying a suitable translation on $s$, we get
\begin{align} \label{4.9}\kappa^2=\frac{b}{4s}
\end{align}
Without loss of generality, we may assume that $b$ is positive. So we may put $b=c^2$ and we get
\begin{align} \label{4.10} \kappa=\frac{c}{2\sqrt{s}}.
\end{align}
Therefore, up to rigid rotations of $\mathbb E^2$ about the origin, the planar curve $\gamma$ is given by
\begin{align} \label{4.11}\gamma= \frac{2}{c^2}\Big(\! \cos(c\sqrt{s})+c\sqrt{s}\sin (c\sqrt{s}),
 \sin(c\sqrt{s})-c\sqrt{s}\cos (c\sqrt{s})\Big)
\end{align}
which is exactly \e{4.1}.

Conversely, if the curve $\gamma$ is defined by \e{4.1}, then we get 
\begin{equation}\begin{aligned} \notag& \gamma'(s)=\big(\cos(c\sqrt{s}), \sin(c\sqrt{s})\big),\;\;
\\& H=\gamma''(s)=\frac{-c}{2\sqrt{s}}\big(\sin(c\sqrt{s}), - \cos(c\sqrt{s})\big).
\end{aligned}\end{equation}
It is direct to verify that $\<H,\gamma\>=-1$ holds identically. Consequently, the canonical vector field of $\gamma$ is incompressible  according to statement (2) of Theorem \ref{T:3.1}.
\end{proof}

\vskip-0.3in

  \begin{figure}[h]  
\setlength{\unitlength}{1.4cm}  
\begin{minipage}{5cm}  
 \begin{picture}(5,5)  
\scalebox{0.25}{\includegraphics{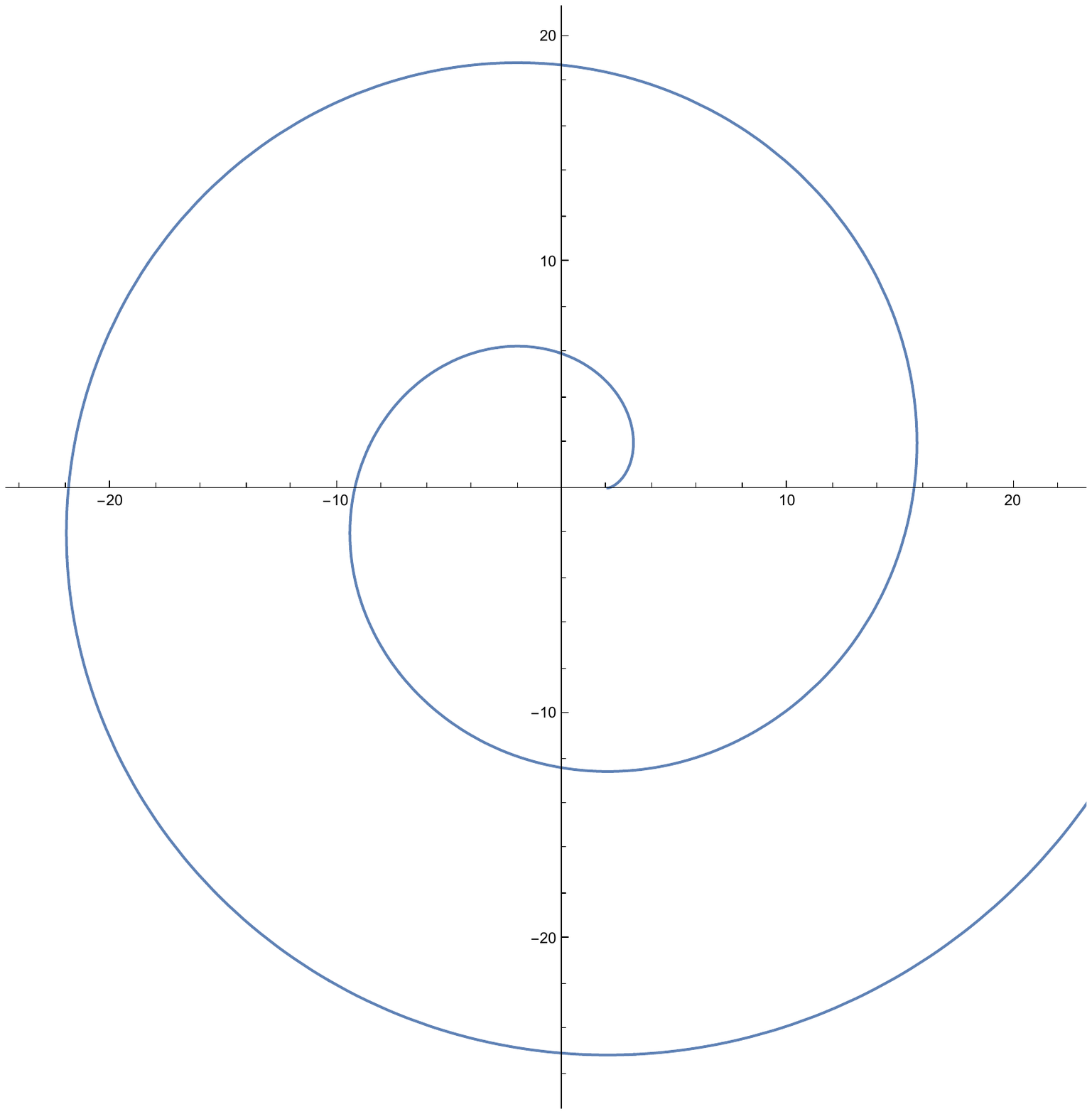} }
\end{picture}  
\end{minipage}  
 \end{figure} 
\vskip-.7in
 
 \centerline{{\bf Figure 1.} Planar curve with incompressible canonical}

 \centerline{vector field defined by (4.1) with $c=1$ and $s\in (0,60\pi)$.}

\section{Hypercylinders and surfaces of revolution with incompressible canonical vector field}

In this section, we present the next result which classifies hypercylinders with incompressible canonical vector field.

\begin{theorem} Let $M^n$ $(n\geq 2)$ be a hypercylinder over a unit speed planar curve $\gamma(s)\subset \mathbb E^2\subset \mathbb E^{n+1}$. Then the canonical vector field of $M^n$ is incompressible if and only if, up to rotations  of $\mathbb E^{2}$ about the origin of $\mathbb E^2$, $M^n$ is an open part of the hypersurface defined by
\begin{equation}\begin{aligned}\label{5.1} &\Bigg((1-n) s  \cos (K(s))+ \sqrt{n(n-1)}\sqrt{c^2-s^2}\sin (K(s)),
\\&\hskip.4in (1-n) s  \sin (K(s))- \sqrt{n(n-1)}\sqrt{c^2-s^2}\cos (K(s)),\\&\hskip.8in  t_2,\ldots,t_n\Bigg),
\end{aligned}\end{equation}
where $K(s)=\frac{\sqrt{n}}{\sqrt{n-1}}\arctan\({s}/{\sqrt{c^2-s^2}}\)$ with $-c<s<c$.
\end{theorem}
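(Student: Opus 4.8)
The plan is to reduce incompressibility to an ordinary differential equation for the curvature $\kappa(s)$ of the profile curve $\gamma$, solve it, and reconstruct $\gamma$ explicitly. First I would parametrize the hypercylinder as $\x(s,t_2,\ldots,t_n)=(\gamma_1(s),\gamma_2(s),t_2,\ldots,t_n)$, where $\gamma=(\gamma_1,\gamma_2)$ is the given unit speed curve. Since $\{\partial_s,\partial_{t_2},\ldots,\partial_{t_n}\}$ is an orthonormal frame and the only nonvanishing second derivative is $\x_{ss}=\kappa N$, with $N=(N_1,N_2,0,\ldots,0)$ the principal normal of $\gamma$ extended by zeros, the second fundamental form has the single entry $h(\partial_s,\partial_s)=\kappa N$. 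Hence the mean curvature vector is $H=\tfrac{\kappa}{n}N$, so $\<H,\x\>=\tfrac{\kappa}{n}\<N,\gamma\>$, and Theorem \ref{T:3.1}(2) turns incompressibility into the single scalar equation $\<N,\gamma\>=-n/\kappa$.

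Next I would differentiate this relation twice along $\gamma$ using the Frenet equations $\gamma'=T$, $T'=\kappa N$, $N'=-\kappa T$. The first differentiation yields $\<T,\gamma\>=-n\kappa'/\kappa^3$, and the second, after substituting back the incompressibility relation, collapses to the ODE $(\kappa'/\kappa^3)'=(n-1)/n$. I expect this step to be the crux of the argument, and it is exactly here that the hypothesis $n\geq 2$ enters: because the right-hand side is a nonzero constant, $\kappa'$ cannot vanish identically, so—in contrast with the planar situation of Theorem \ref{T:4.1}—there is no separate ``circle centered at the origin'' branch. Integrating once gives $\kappa'/\kappa^3=\tfrac{n-1}{n}s+A$, and a translation in $s$ removes $A$; a second integration produces $\kappa^2=\frac{n}{(n-1)(c^2-s^2)}$ for a positive constant $c$, valid on $-c<s<c$.

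Finally I would reconstruct $\gamma$, preferably by expanding it in its own Frenet frame rather than integrating $\gamma'=T$ directly. From the formulas above one reads off $\<T,\gamma\>=-(n-1)s$ and $\<N,\gamma\>=-\sqrt{n(n-1)}\sqrt{c^2-s^2}$, so $\gamma=\<T,\gamma\>T+\<N,\gamma\>N$ with $T=(\cos\theta,\sin\theta)$ and $N=(-\sin\theta,\cos\theta)$, where the tangent angle is $\theta(s)=\int\kappa\,ds=\frac{\sqrt n}{\sqrt{n-1}}\arcsin(s/c)$. The elementary identity $\arcsin(s/c)=\arctan(s/\sqrt{c^2-s^2})$ identifies $\theta$ with the function $K(s)$ of the statement, and substitution then yields precisely \e{5.1}; the remaining freedom, namely the additive constant in $\theta$, acts on $\gamma$ as a rotation about the origin and accounts for the phrase ``up to rotations of $\mathbb E^2$ about the origin.'' For the converse I would begin from \e{5.1}, verify by direct differentiation that it is unit speed with $K'=\kappa$ and that $\<H,\gamma\>=-1$ holds identically, and conclude via Theorem \ref{T:3.1}(2). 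The only genuinely delicate points are getting the factor $1/n$ correct in $H$ and recognizing the $\arcsin$--$\arctan$ identity that matches the reconstruction to the stated form.
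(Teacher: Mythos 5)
Your proposal is correct, and it follows the paper's overall strategy: reduce incompressibility via Theorem \ref{T:3.1}(2) and $H=\kappa N/n$ to the condition $\<\gamma,\gamma''\>=-n$, derive an ODE for the curvature, solve it to get $\kappa^2=\frac{n}{(n-1)(c^2-s^2)}$ (the paper's \e{5.6}), and then reconstruct $\gamma$. Your intermediate steps improve on the paper's exposition in two respects. First, the paper only says the curvature ODE follows ``analogous to the proof of Theorem \ref{T:4.1}(b)'' and records it in the form $n(\kappa\kappa''-3\kappa'^2)=(n-1)\kappa^4$ (equation \e{5.5}); you actually carry out the two Frenet differentiations and obtain the equivalent first-integral-friendly form $(\kappa'/\kappa^3)'=(n-1)/n$ (multiply by $n/\kappa^4$ to see the equivalence), which makes the integration transparent and also explains cleanly why, for $n\geq 2$, there is no circle branch analogous to case (a) of Theorem \ref{T:4.1} --- a point the paper leaves implicit. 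Second, your reconstruction step is genuinely different in technique: the paper writes $\gamma=\(\int^s\cos K\,ds,\int^s\sin K\,ds\)$ as in \e{5.8} and then uses the constraint \e{5.10} together with differentiation to evaluate both antiderivatives in closed form (\e{5.11}--\e{5.13}), whereas you expand $\gamma$ in its own Frenet frame, $\gamma=\<T,\gamma\>T+\<N,\gamma\>N$ with $\<T,\gamma\>=(1-n)s$ and $\<N,\gamma\>=-\sqrt{n(n-1)}\sqrt{c^2-s^2}$ already known from the derivation, landing directly on \e{5.1} via the $\arcsin$--$\arctan$ identity. Your route is shorter, avoids the integral manipulations, automatically fixes the translational integration constants (the Frenet expansion pins down the position, so only the additive constant in the tangent angle --- a rotation --- remains, matching the ``up to rotations'' phrasing), while the paper's route has the mild virtue of exhibiting the closed-form antiderivatives explicitly. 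Both handle the converse the same way, by direct verification of $\<\gamma,\gamma''\>=-n$.
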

\begin{proof}
Let $M^n\subset \mathbb E^{n+1}$ be a hypercylinder over a unit speed planar curve $\gamma(s)$ with $n\geq 2$. Without loss of generality, we may assume that $M^n$ is parametrized by
\begin{align}\label{5.2} x(s,t_2,\ldots,t_n)=\big(\gamma(s),t_2,\ldots,t_n),
\end{align}
where $\gamma(s)$ is a unit speed planar curve. It is easy to verify that the mean curvature vector of $M^n$ in $\mathbb E^{n+1}$ satisfies
\begin{align}\label{5.3} H=\frac{\gamma''}{n}=\frac{\kappa N}{n},
\end{align}
where $N$ denotes the principal normal of the planar curve $\gamma$.
Hence it follows from \e{5.2}, \e{5.3} and  Theorem 3.1(2) that the hypercylinder $M^n$ has incompressible canonical vector field if and only if the planar curve $\gamma$ satisfies
\begin{align}\label{5.4} \<\gamma,\gamma''\>=-n.\end{align}

Analogous to the proof Theorem \ref{T:4.1}(b), it is direct to verify  that if  a unit speed planar curve $\gamma$ satisfies \e{5.4}, then its curvature function $\kappa$ satisfies
\begin{align}\label{5.5}n(\kappa\kappa''-3\kappa'^2)=(n-1)\kappa^4,\;\; n\geq 2.\end{align}

After solving this second order differential equation, we know that up to translations on $s$ the solutions of \e{5.5} satisfy
\begin{align}\label{5.6}\kappa(s)=\frac{\sqrt{n}}{\sqrt{n-1}\sqrt{c^2-s^2}},\;\; n\geq 2,
\end{align}
 where $c$ is a nonzero real number. From \e{5.6} we obtain
\begin{align}\label{5.7}K(s)=\frac{\sqrt{n}}{\sqrt{n-1}}\arctan\(\frac{s}{\sqrt{c^2-s^2}}\),   \;\; -c<s<c,
\end{align} where $K$ is an anti-derivative of $\kappa$.
Hence, up to rotations of $\mathbb E^2$ about the origin, the planar curve $\gamma(s)$ with curvature function $\kappa$ is given by
\begin{align}\label{5.8}\gamma=\(\int^s\cos (K(s))ds,\int^s \sin (K(s))ds\).
\end{align}
It follows from \e{5.8} that 
\begin{align}\label{5.9}\gamma''=-\kappa(s)\Big(\sin (K(s)), -\cos (K(s))\Big).
\end{align}
Now, it follows from \e{5.4}, \e{5.8} and \e{5.9} that
\begin{align}\label{5.10} \frac{n}{\kappa(s)}=\sin (K(s))\! \int^s\! \cos (K(s))ds-\cos (K(s))\! \int^s\! \sin (K(s))ds,
\end{align}
which yields
\begin{equation}\begin{aligned}\label{5.11} \int^s\cos (K(s))ds=\,&\sqrt{n(n-1)}\sqrt{c^2-s^2}\csc (K(s))
\\&+ \cot (K(s))\int^s \sin (K(s))ds.
\end{aligned}\end{equation}
Now, by differentiating \e{5.11} we find
\begin{equation}\begin{aligned}\label{5.12} \int^s\sin (K(s))ds=\,&(1-n) s  \sin (K(s))\\&- \sqrt{n(n-1)}\sqrt{c^2-s^2}\cos (K(s)).
\end{aligned}\end{equation}
By substituting \e{5.12} into \e{5.10} we find 
\begin{equation}\begin{aligned}\label{5.13} \int^s\cos (K(s))ds=\,&(1-n) s  \cos (K(s))\\&+ \sqrt{n(n-1)}\sqrt{c^2-s^2}\sin (K(s)).
\end{aligned}\end{equation}

Therefore, by substituting \e{5.10} and \e{5.11} into \e{5.8} we obtain
\begin{equation}\begin{aligned}\label{5.14} \gamma=\,&\Bigg((1-n) s  \cos (K(s))+ \sqrt{n(n-1)}\sqrt{c^2-s^2}\sin (K(s)),
\\&\hskip.1in (1-n) s  \sin (K(s))- \sqrt{n(n-1)}\sqrt{c^2-s^2}\cos (K(s))\Bigg).
\end{aligned}\end{equation}
It is easy to verify that the planar curve $\gamma$ defined by \e{5.14} satisfies $\<\gamma,\gamma''\>=-n$ identically. Consequence, the hypercylinder over $\gamma$ with incompressible canonical vector field is an open part of the hypersurface defined by \e{5.1}.
\end{proof}

  \begin{figure}[h]  
\setlength{\unitlength}{1.4cm}  
\begin{minipage}{5.5cm}  
 \begin{picture}(5,5)  
\scalebox{0.28}{\includegraphics{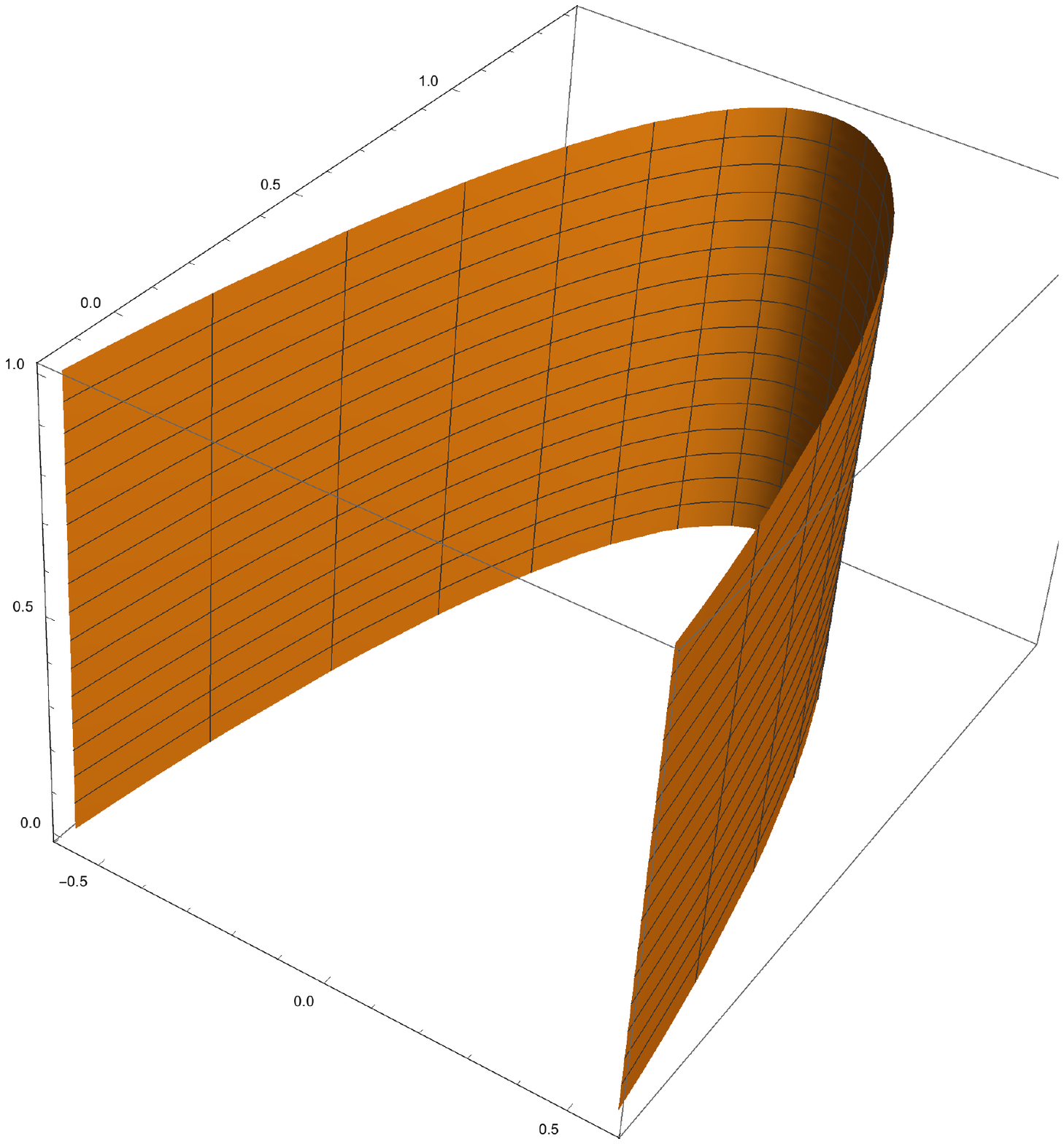} }
\end{picture}  
\end{minipage}  
 \end{figure} 
\vskip-0.5in
 
 \centerline{{\bf Figure 2.} Hypercylinder with incompressible canonical vector field}

 \centerline{defined by (5.1) with $c=1$, $n=2$, $|s|<0.7$ and $t_2\in (0,1)$.}

\vskip.2in
If we consider a {\it surface of revolution} in $\mathbb E^3$ of the form:
\begin{align}\label{5.15} \x(s,t)=(r(s)\cos t, r(s)\sin t, s), \end{align}
then it is direct to verify that the mean curvature vector of the surface is given by
\begin{equation}\begin{aligned}\label{5.16} H=\,&  \frac{1+r'(s)^2-r(s)r''(s)}{2r(s)(1+r'(s)^2)^2}\big(\! -\cos t,-\sin t,r'(s)\big).
\end{aligned}\end{equation}
It follows from \e{5.15}, \e{5.16} and Theorem \ref{T:3.1}(2) the following.

\begin{proposition} The surface of revolution in $\mathbb E^3$ defined by \e{5.15} has incompressible canonical vector field if and only if the function $r$ satisfies the following second order differential equation:
\begin{align}\label{5.17} (1+r'^2)(r+sr'+2r r'^2)+r(r-sr')r''=0.\end{align}
\end{proposition}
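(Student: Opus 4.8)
The plan is to read off \e{5.17} as a direct translation of statement (2) of Theorem \ref{T:3.1}, which asserts that the canonical vector field is incompressible exactly when $\<H,\x\>=-1$ holds at every point. Since the mean curvature vector is already recorded in \e{5.16} and the position vector field is $\x=(r(s)\cos t,\,r(s)\sin t,\,s)$ by \e{5.15}, the entire argument reduces to evaluating this one scalar product and rewriting the pointwise condition $\<H,\x\>=-1$ as an ordinary differential equation in $r$. Because the statement is an equivalence at every stage, both directions of the ``if and only if'' will come out together.

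First I would compute the relevant dot product. The prefactor of $H$ in \e{5.16} is a scalar, so only
\[\<(-\cos t,-\sin t,r'),(r\cos t,r\sin t,s)\>\]
enters. Here the angular terms collapse: the first two slots give $-r\cos^2 t-r\sin^2 t=-r$ and the last gives $sr'$, so the bracket equals $sr'-r$ with no residual dependence on $t$. This yields
\[\<H,\x\>=\frac{(1+r'^2-rr'')(sr'-r)}{2r(1+r'^2)^2}.\]

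Then I would impose $\<H,\x\>=-1$ and clear the denominator. For a genuine surface of revolution $r(s)>0$, so $2r(1+r'^2)^2\neq 0$ and the condition is equivalent to $(1+r'^2-rr'')(sr'-r)+2r(1+r'^2)^2=0$. The cleanest way to finish is to peel off the $rr''$ term and factor $(1+r'^2)$ out of what remains: writing $2r(1+r'^2)^2=(1+r'^2)\cdot 2r(1+r'^2)$ and combining it with $(1+r'^2)(sr'-r)$ produces $(1+r'^2)(r+sr'+2rr'^2)$, while $-rr''(sr'-r)=r(r-sr')r''$. Collecting these gives precisely \e{5.17}. I expect no genuine obstacle here---the conceptual content is carried entirely by Theorem \ref{T:3.1}(2) together with the supplied formula for $H$---and the only point requiring care is the bookkeeping in this last rearrangement, which the factoring trick keeps short.
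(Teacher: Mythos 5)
Your proposal is correct and follows exactly the route the paper takes: the paper states that the Proposition ``follows from \eqref{5.15}, \eqref{5.16} and Theorem \ref{T:3.1}(2),'' and your write-up simply supplies the dot-product computation $\<H,\x\>=\frac{(1+r'^2-rr'')(sr'-r)}{2r(1+r'^2)^2}$ and the algebraic rearrangement of $\<H,\x\>=-1$ into \eqref{5.17} that the paper leaves implicit. Your factoring of $(1+r'^2)$ and the sign bookkeeping on the $rr''$ term check out, so this is a valid (and slightly more detailed) rendering of the paper's own argument.
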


\begin{remark} One solution of \e{5.17} is $r=\sqrt{1-s^2}$. The corresponding surface of revolution defined by \e{5.15} with $r=\sqrt{1-s^2}$ is nothing but the unit sphere centered at the origin in $\mathbb E^3$.
\end{remark}

\section{Examples of surfaces in $\mathbb E^4$ with incompressible canonical vector field}

Finally, we provide some examples of surfaces in $\mathbb E^4$ with incompressible canonical vector field.
Let us consider product surfaces of two unit speed planar curves $\beta(s)$ and $\gamma(t)$ given by
\begin{align}\label{6.1} \x(s,t)=(\beta(s), \gamma(t)).\end{align}
Then the mean curvature vector of this product surface is given by
\begin{align}\label{6.2} H=\frac{1}{2} \big(\beta''(s), \gamma''(t)\big).\end{align}
According to Theorem \ref{T:3.1}(2), the surface has incompressible canonical vector field if and only if
\begin{align}\label{6.3} \<\beta(s),\beta''(s)\>+\<\gamma(t), \gamma''(t)\>=-2\end{align}
holds identically, which implies that $\beta(s)$ and $\gamma(t)$ satisfy
\begin{equation}\begin{aligned}\label{6.4} &\<\beta(s),\beta''(s)\>=-a,\;\; \\& \<\gamma(t), \gamma''(t)\>=-(2-a)\end{aligned}\end{equation}
for some constant $a$.

The simplest such examples in $\mathbb E^4$ with incompressible canonical vector field is by taking $a=1$ in \e{6.4}.  Clearly, such a surface is the product of two planar curves with incompressible canonical vector field given by type (a) or type (b) as defined in Theorem \ref{T:4.1}.

\end{document}